\newtheorem{theorem}{Theorem}[section]
\newtheorem{lemma}[theorem]{Lemma}
\newtheorem{proposition}[theorem]{Proposition}
\gdef\mnote#1{\marginpar{\footnotesize
 \tolerance\@M\spaceskip2.6\p@ plus10\p@ minus.9\p@\rm#1}}}
\def\Dg:{\endgraf{\bf Dg:\enspace}\ignorespaces}
\let\Bbb\mathbb
\def\sm{\smallsetminus}
\newcommand{\be}{\begin{equation}}
\newcommand{\ee}{\end{equation}}
\let\ge\geqslant 
\let\le\leqslant 
\let\til\widetilde
\let\e\varepsilon
\def\Z{\Bbb Z}
\def\R{\Bbb R}
\def\Rp#1{\Bbb{RP}^{#1}}
\newcommand{\addresseshere}{%
  \enddoc@text\let\enddoc@text\relax
}
\begin{document}
\renewcommand{\arraystretch}{1.2}

%
%

\title[First homology of a real cubic is generated by lines]
{First homology of a real cubic is generated by lines}

\author[]
{S.~Finashin, V.~Kharlamov}
\address{Middle East Technical University,
Department of Mathematics\endgraf Ankara 06531 Turkey}
\address{
Universit\'{e} de Strasbourg et IRMA (CNRS)\endgraf 7 rue
Ren\'{e}-Descartes, 67084 Strasbourg Cedex, France}
\keywords{Cubic hypersurfaces, real lines}
\subjclass[2010]{14P25}


\maketitle

\setlength\epigraphwidth{.57\textwidth}
\epigraph{
All things are difficult before they are easy.
}
{Thomas Fuller's (Gnomologia, 1732)
\\
Equivalent Ancient
Chinese Proverb:  
\begin{CJK*}{UTF8}{gbsn}
万事开头难.
\clearpage\end{CJK*}
}

\section{Introduction} The celebrated discovery by J.~Nash of existence of real algebraic structure on any smooth manifold has opened an entire
direction in real algebraic geometry where various
problems of modeling and approximating of other smooth objects by real algebraic ones were deeply investigated
(see the monograph \cite{bcr} for basic facts and general references).
Many of these problems,
such as asking
under which assumptions a vector bundle over smooth manifold can be realized algebraically
and a smooth map can be approximated, or realized up to homotopy, by an algebraic one,
have led in their turn to
various questions on representing homology classes by algebraic cycles.
 A simplest, but typical, example is the following statement: if all classes in $H^1(M;\Z/2)$,
where $M$ is a non-singular
real algebraic manifold, are algebraic, then each continuous map $M \to S^1$ can be approximated by
an entire rational map ({\it cf.} Lemma 14 in \cite{Ivanov}).

Recently, being motivated by investigation of the real integral Hodge conjecture for $1$-cycles \cite{hodge} and in connection
with a certain tight approximation property \cite{tight}, O.~Benoist and O.~Wittenberg
proved the following theorem.

\begin{theorem}{\rm (\cite[Theorem 9.23]{hodge})}\label{main} For each real non-singular cubic hypersurface $X$ of dimension $\ge 2$
the real lines on $X$ generate the whole group $H_1(X(\R);\Z/2)$.
\end{theorem}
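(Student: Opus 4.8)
The plan is to dispose of the surface case directly and then to reduce every cubic hypersurface of dimension $\ge3$ to it in a single step, by projecting from a real line. For $\dim X=2$ one uses the Schläfli--Klein classification of real non-singular cubic surfaces: there are five deformation classes, with real loci $\Rp2$, $\Rp2\sqcup S^2$, and the non-orientable surfaces $\Rp2\#k\,\overline{\Rp2}$ for $k=2,4,6$, carrying $3,3,7,15,27$ real lines respectively. In each case the real lines visibly generate $H_1(X(\R);\Z/2)$: in the cases with an empty $\Rp2$-summand a single real line already represents the generator, while for $\Rp2\#k\,\overline{\Rp2}$ the $k$ exceptional curves of a real contraction $X\dashrightarrow\Rp2$ (which are lines) together with the real locus of a line joining two of the contracted points account for all of $H_1\cong(\Z/2)^{k+1}$. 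In particular every real non-singular cubic surface contains a real line, and cutting a higher-dimensional $X$ by a general real $\Cp3$ then shows that every real non-singular cubic $X$ of dimension $\ge2$ contains a real line.

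Now let $n=\dim X\ge3$ and fix a real line $\ell\subset X$. Projection from $\ell$ becomes, after the blow-up $\rho\colon\til X=\mathrm{Bl}_\ell X\to X$, a conic bundle $\pi\colon\til X\to B:=\Cp{n-1}$, the fibre over $b\in B$ being the residual conic $C_b$ of $P_b\cap X=\ell\cup C_b$, where $P_b$ is the $2$-plane through $\ell$ indexed by $b$. The map $\rho$ is proper and surjective with connected fibres ($\rho^{-1}$ of a point of $\ell(\R)$ is an $\Rp{n-2}$), so $\rho_*\colon H_1(\til X(\R);\Z/2)\to H_1(X(\R);\Z/2)$ is onto and carries strict transforms of real lines of $X$ to cycles with the same homology class; hence it suffices to generate $H_1(\til X(\R);\Z/2)$ by classes of strict transforms of real lines of $X$. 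Two features of $\pi$ are exploited: over the discriminant hypersurface $D\subset B$ the conic $C_b$ degenerates into a pair of lines of $X$ (or a double line), and a general real line $L\subset B$ pulls back to $\pi^{-1}(L)\cong S$, the cubic surface $S=X\cap\Cp3$ cut out by the $\Cp3$ swept out by $\{P_b\}_{b\in L}$, with $\ell(\R)\subset S(\R)$.

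The remaining, and crucial, point is a real analogue of the Lefschetz hyperplane theorem for $\pi$: every class of $H_1(\til X(\R);\Z/2)$ should be homologous to a class supported over a general line $L\subset B$, up to classes of conic fibres that degenerate onto real line-pairs. Concretely, one represents the class by an embedded circle $c$, makes $\pi(c)\subset\Rp{n-1}$ transverse to $D(\R)$, homotopes it towards $L$ while keeping it over the locus with non-empty real fibre, and lifts the homotopy, recording the corrections forced at the finitely many generic crossings of $D(\R)$; each correction is $[m(\R)]+[m'(\R)]$ for two real lines $m,m'\subset X$, or $0$ when the degenerate conic has conjugate line components and its real locus collapses to a point inside a vanishing disc. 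The leftover class lies in $\pi^{-1}(L)(\R)\cong S(\R)$ and is a $\Z/2$-combination of classes of real lines of $S\subset X$ by the surface case, so $[c]$ is a $\Z/2$-combination of real-line classes of $X$; applying $\rho_*$ gives the theorem.

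The delicate step is precisely this last one. Real Lefschetz statements are false in general — already a smooth real quadric surface shows that one hyperplane section may fail to generate $H_1$ of the real locus, and it is exactly the rulings (families of lines) that repair this — so nothing survives verbatim from the complex picture. What must be controlled is (i) that $\pi(c)$ can indeed be pushed into a general line inside the region of $\Rp{n-1}$ over which $\pi$ has non-empty real fibres, handling the empty-fibre locus and the fact that $\pi$ is not a fibration; and (ii) that the modifications incurred at real degenerations of $C_b$ are exactly $\Z/2$-combinations of classes of real lines of $X$ — in effect, that the real vanishing data of the conic bundle along a path is itself generated by lines. Establishing (i) and (ii), together with the finite verification of the surface case that feeds the argument at dimension two, is where essentially all of the work lies.
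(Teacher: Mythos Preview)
Your route via projection from a real line is genuinely different from the paper's. The paper instead places $X$ in a Morse family $\{X_t\}$ degenerating to a one-nodal cubic $X_0$, shows that $H_1(X_0(\R)\sm p)$ is spanned by real lines avoiding the node (Proposition~\ref{nodal}, itself reduced via real $3$-planes through $p$ to the case of nodal cubic \emph{surfaces}, Lemma~\ref{cubic-surfaces}), and then transports these generators to the nearby smooth $X_t$ using deformation-stability of balanced lines (Lemma~\ref{stability}) together with a short exact-sequence/duality argument (Lemma~\ref{for2andhigher}); the one exceptional Morse index is handled by separately deforming lines through the node (Lemma~\ref{index1-perturbation}). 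The structural point is that projection from a \emph{node} is birational onto $\Cp{n}$, so there is neither an empty-fibre region nor any conic-bundle monodromy to control: the whole difficulty is pushed into the Morse bookkeeping, which is elementary.

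Your plan, by contrast, leaves the decisive steps (i)--(ii) open, and (i) in particular is a genuine obstruction rather than a routine verification. The locus $U\subset\Rp{n-1}$ over which the residual conic $C_b$ has empty real locus is an open set bounded by pieces of $D(\R)$, and its complement can have large fundamental group; nothing you have said prevents $\pi(c)$ from representing a class in $\pi_1(\Rp{n-1}\sm U)$ not realised by any projective line lying in $\Rp{n-1}\sm U$, and you offer no mechanism to rule this out. Step~(ii) is also more than bookkeeping: over $\Rp{n-1}\sm(U\cup D(\R))$ the map $\pi$ is an $S^1$-bundle with, in general, nontrivial structure, so a homotopy of $\pi(c)$ lifts only to a homotopy of \emph{paths}, and the endpoint drift in the fibres contributes additional $1$-cycles whose expression through lines is exactly the assertion being proved. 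As written the proposal is an outline with its central mechanism missing, and it is not clear it can be completed along the lines you indicate without importing something of comparable strength to the theorem itself; the nodal-degeneration approach avoids both problems at once.
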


The aim of our paper is to give a short proof of this statement that uses more
traditional and elementary tools.
We start from analysis of an appropriate similar statement for one-nodal hypersurfaces (see Proposition \ref{nodal}), and then
transfer the result to non-singular hypersurfaces by standard Morse theory arguments.
Concluding remarks contain some examples showing that Theorem \ref{main} cannot be generalized
in a straightforward way to (1) other rationally connected varieties like rational surfaces and hypersurfaces of degree 4 (of dimension $\ge 4$),
and (2) to higher homology groups of cubic hypersurfaces.

\subsection{Conventions} In what follows the homology groups are all with $\Z/2$-coefficients and we omit $\Z/2$ from notation.
Speaking on real algebraic varieties $X$, we identify $X$ with its complex point set and denote by $X(\R)$ its real locus.

\section{The case of nodal cubics}
Consider a cubic hypersurface $X_0\subset P^{n+1}$ with a singular point $p\in X_0$ of multiplicity two.
In affine coordinates centered at $p$ such a hypersurface
is defined by equation
$f_2+f_3=0$, where $f_2$ and $f_3$ are homogeneous polynomials
of degree 2 and 3, respectively.
These polynomials define on the infinity hyperplane $P^n\subset P^{n+1}$ a
quadric $Q$ with equation $f_2=0$ and a cubic $C$ with equation $f_3=0$.
Note that the quadric $Q$ and the intersection $Q\cap C$ are independent of the coordinate choice,
if the infinity hyperplane $P^n$ is identified with the space of lines in $P^{n+1}$ passing through $p$.
 Moreover, quadric $Q$ is naturally identified with
 the exceptional divisor $\til Q$ after blowing up $\til X_0\to X_0$ at point $p$, whereas $Q\cap C$ is identified
with the variety of lines on $X_0$ passing through $p$.

By definition, $p$ is a {\it node}
provided $Q$ is non-singular.

\begin{lemma}\label{transversal}
If $p$ is a node, then the intersection $Q\cap C$ is transversal if and only if
$X_0$ has no singular points other than $p$.
\end{lemma}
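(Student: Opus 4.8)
The plan is to compute everything explicitly in homogeneous coordinates. Homogenize the affine equation $f_2+f_3=0$ to $F=x_0f_2+f_3$ on $P^{n+1}$, with coordinates chosen so that $p=[1:0:\dots:0]$ and the infinity hyperplane is $P^n=\{x_0=0\}$ with coordinates $x_1,\dots,x_{n+1}$; there $Q=\{f_2=0\}$ and $C=\{f_3=0\}$. The partial derivatives are $F_{x_0}=f_2$ and $F_{x_i}=x_0(f_2)_{x_i}+(f_3)_{x_i}$ for $i\ge1$, and by Euler's identity the singular locus of $X_0$ is cut out by these $n+2$ equations alone. A direct substitution shows that $p$ itself is always a singular point (all partials vanish there, since $f_2$, $f_3$ and their first derivatives vanish at the origin), so the real content of the lemma is to match the \emph{other} singular points of $X_0$ with the failures of transversality of $Q\cap C$.

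First I would analyse a hypothetical singular point $q=[q_0:q']$ with $q\ne p$, so that $q'=(q_1,\dots,q_{n+1})\ne0$ and $[q']\in P^n$ is well defined. The equations $F_{x_0}=0$ and $F_{x_i}=0$ $(i\ge1)$ read $f_2(q')=0$ and $q_0\,\nabla f_2(q')+\nabla f_3(q')=0$, where $\nabla$ is the gradient in $x_1,\dots,x_{n+1}$; applying Euler's identity to the latter vector relation and using $f_2(q')=0$ forces $f_3(q')=0$ as well. Hence $[q']\in Q\cap C$ and the gradients $\nabla f_2(q')$ and $\nabla f_3(q')$ are proportional there. Now the hypothesis that $p$ is a node enters decisively: since $Q$ is non-singular, $\nabla f_2$ is nowhere zero on $Q$, so proportionality of the gradients means exactly that $Q\cap C$ fails to be transversal at $[q']$ — with $C$ singular there when $q_0=0$, and $Q$, $C$ tangent (but both smooth) when $q_0\ne0$. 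This yields the "only if'' direction.

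For the converse, start from a point $b\in Q\cap C$ at which the intersection is not transversal. As $Q$ is smooth, $b$ is a smooth point of $Q$ with $\nabla f_2(b)\ne0$, and non-transversality is equivalent to $\nabla f_3(b)=\mu\,\nabla f_2(b)$ for some scalar $\mu$ (with $\mu=0$ precisely when $b\in C$ is a singular point of $C$). Looking for a singular point of $X_0$ of the shape $q=[q_0:b]$, the vector equation $q_0\,\nabla f_2(b)+\nabla f_3(b)=0$ is solved by $q_0=-\mu$, while $f_2(b)=f_3(b)=0$ takes care of $F_{x_0}=0$; thus $q=[-\mu:b]$ is a singular point of $X_0$ distinct from $p$ (indeed $b\ne0$). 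I expect the only delicate part to be the bookkeeping between the two regimes $q_0=0$ (singular points on the hyperplane $\{x_0=0\}$, corresponding to singular points of the cubic $C$) and $q_0\ne0$ (affine singular points, corresponding to honest tangencies of $Q$ and $C$), together with the careful use of the degrees of $f_2$ and $f_3$ under the rescaling $q'\mapsto tq'$ and the invocation of smoothness of $Q$ to legitimately pass from the relation $q_0\,\nabla f_2+\nabla f_3=0$ to proportionality of the two gradients.
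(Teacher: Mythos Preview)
Your argument is correct: the direct computation with $F = x_0 f_2 + f_3$, using Euler's identity to deduce $f_3(q')=0$ from $q_0\nabla f_2(q')+\nabla f_3(q')=0$ and $f_2(q')=0$, and then invoking non-singularity of $Q$ to translate the gradient relation into non-transversality, is exactly the standard verification. The paper does not spell out a proof at all --- it simply declares the statement ``straightforward and well known'' and points to Lemma~2.2 of \cite{deformation} --- so your explicit coordinate computation is precisely the kind of routine check that reference is meant to cover, and follows the same approach.
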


\begin{proof}
It is straightforward and well known (see, e.g., Lemma 2.2 in \cite{deformation}).
\end{proof}

By a {\it secant} of  $Q\cap C$ below we mean a line in $P^n$ intersecting  $Q\cap C$ at a pair of points, or at one point with multiplicity 2.
In terms of equations it means that $f_2$ and $f_3$ restricted to the line under consideration have two common zeros or have a common zero
of multiplicity 2 for $f_2$ and of multiplicity $\ge 2$ for $f_3$.

\begin{lemma}\label{lines-on-X0}
If $X_0$ is a cubic hypersurface with a node at a point $p$, then
the central projection from $p$ projects
a line on $X_0$ not passing through $p$ to a secant of $Q\cap C$.
Conversely, a
secant of $Q\cap C$ is a projection of a line on $X_0$ not passing through $p$ as soon as the secant is not contained in $Q$.
\end{lemma}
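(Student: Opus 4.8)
The plan is to set up affine coordinates $(x_1,\dots,x_{n+1})$ centered at $p$, so that $X_0$ is cut out by $f_2+f_3=0$ with $f_2,f_3$ homogeneous of degrees $2,3$, and to think of the central projection $\pi\colon P^{n+1}\dashrightarrow P^n$ from $p$ as sending a point $x\neq p$ to the line $\overline{px}$, i.e.\ to the class $[x]$ in the hyperplane at infinity $P^n$ of directions through $p$. A line $\ell$ on $X_0$ not through $p$ then maps isomorphically onto a line $\bar\ell=\pi(\ell)$ in $P^n$, and I want to show $\bar\ell$ is a secant of $Q\cap C$, and conversely recover $\ell$ from a secant not contained in $Q$.

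First I would parametrize. Pick two distinct points $a,b\in\ell$; since $\ell$ avoids $p$, the affine line through them is $x(t)=a+t(b-a)$ (or in projective terms the plane spanned by $p,a,b$ meets $P^n$ in the line $\bar\ell$ through $[a],[b]$). Restricting the defining equation, $f_2(x(t))+f_3(x(t))=0$ for all $t$; here $g_2(t):=f_2(x(t))$ has degree $\le 2$ and $g_3(t):=f_3(x(t))$ has degree $\le 3$ in $t$, and $g_2+g_3\equiv 0$. Since a polynomial identity forces each piece to vanish only after regrouping by degree — but $g_2,g_3$ are not homogeneous — the cleaner route is to homogenize: write points of $\bar\ell$ as $[sa+tb]$ (a genuine line in $P^n$ because $\ell\not\ni p$ means $a,b$ are linearly independent as directions), and observe that $f_2(sa+tb)$ and $f_3(sa+tb)$ are homogeneous forms in $(s,t)$ of degrees $2$ and $3$. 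The condition that $\overline{pab}$-plane's intersection with $X_0$ contains the line $\ell$ translates, after the standard computation, into $f_3(sa+tb)$ being divisible by $f_2(sa+tb)$ as binary forms; since $\deg f_2=2$ and $\deg f_3=3$, this means $f_2(sa+tb)$ and $f_3(sa+tb)$ share a common quadratic factor, equivalently their common zero locus on $\bar\ell$ is a degree-$2$ divisor. That degree-$2$ divisor is exactly $\bar\ell\cap(Q\cap C)$, so $\bar\ell$ is a secant in the sense defined (two distinct points, or one point with the stated multiplicities — the multiplicity bookkeeping, $2$ for $f_2$ and $\ge 2$ for $f_3$, being precisely what the common-quadratic-factor condition gives).

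For the converse, start from a secant $m\subset P^n$ with $m\not\subset Q$, so $f_2|_m$ is a nonzero binary quadratic form vanishing on the degree-$2$ divisor $D=m\cap(Q\cap C)$. I would lift $m$ to the plane $\Pi=\overline{pm}\subset P^{n+1}$ through $p$, and look at $X_0\cap\Pi$: this is a plane cubic curve containing $p$ as a singular (double) point, whose tangent cone at $p$ is cut by $f_2|_m$ and whose "non-linear part" is governed by $f_3|_m$. The key algebraic fact is that a plane cubic with a double point at $p$ is a union of $\ell'$ and a conic through $p$, or a union of three lines through... no — rather, projecting $X_0\cap\Pi$ from $p$ to $m$ is a $2:1$ map (degree $3$ curve, one point of multiplicity $2$ removed), and the secant condition says $f_2|_m \mid f_3|_m$, which forces $X_0\cap\Pi$ to contain a residual line $\ell$; this $\ell$ does not pass through $p$ precisely because... here is where $m\not\subset Q$ enters: if $\ell$ passed through $p$ it would have to lie in the tangent cone, i.e.\ be a component of $\{f_2|_m=0\}$ inside $\Pi$, and one checks this forces $m\subset Q$. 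So $\ell$ is a line on $X_0$ not through $p$ with $\pi(\ell)=m$.

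The main obstacle — and the place where I would spend the most care — is the multiplicity and transversality bookkeeping in the "common quadratic factor" step: making precise that a line $\ell\subset X_0$ with $p\notin\ell$ corresponds to $f_2|_{\bar\ell}$ dividing $f_3|_{\bar\ell}$ as binary forms, and that this divisibility is equivalent to $\bar\ell\cap(Q\cap C)$ being a degree-$2$ divisor with the asymmetric multiplicities ($\le 2$ for the cubic vs.\ $=2$ for the quadric) stipulated in the definition of secant. The tangential case, where $\bar\ell$ meets $Q\cap C$ at a single point, is exactly where one must argue that $f_2$ has a double zero while $f_3$ has a zero of order $\ge 2$ there — not $=2$ — and conversely; this is a short but genuinely necessary direct computation with the binary forms $f_2(sa+tb)$, $f_3(sa+tb)$, which I would carry out explicitly rather than by appeal to a general position argument. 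Everything else (the isomorphism $\ell\cong\bar\ell$, the residual-line extraction, the role of $m\not\subset Q$) is routine projective geometry of plane cubics with a node.
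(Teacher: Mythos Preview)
Your plan is correct and follows essentially the same route as the paper: both arguments reduce everything to the divisibility $f_2|_{\bar\ell}\mid f_3|_{\bar\ell}$ of binary forms on the projected line, which is exactly the secant condition. For the converse the paper writes the lift explicitly, parametrizing $m$ by $(\phi_1,\dots,\phi_{n+1})$ and setting the extra affine coordinate to $\phi=-f_3(\phi_i)/f_2(\phi_i)$ (linear in $u,v$ precisely because $m\not\subset Q$ is a secant); this formula is nothing but your ``residual line'' of the nodal plane cubic $X_0\cap\Pi$, written out in coordinates.
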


\begin{proof} To prove the first statement we choose affine coordinates so that $l$ is not contained in the infinity hyperplane and note
that vanishing of $f_2+f_3$ on $l$ implies that $f_3$ restricted to $l$ takes zero value at each of the zeros of $f_2$ with at least the same multiplicity.

Reciprocally, if a line $l'\subset P^n$ is a secant of $Q\cap C$, then we pick a linear parametrization $x_1=\phi_1(u,v), \dots, x_{n+1}=\phi_{n+1}(u,v)$
for $l'$, lift it up to a fractional parametrization $x_1=\phi_1(u,v)/\phi(u,v), \dots,  x_{n+1}=\phi_{n+1}(u,v)/\phi(u,v)$
of a line $l$ in $P^{n+1}$, and note that $l$ is contained in $X_0$ as soon as we pose $\phi = - f_3(\phi_1, \dots, \phi_{n+1})/f_2(\phi_1, \dots, \phi_{n+1})$,
where such
$\phi$ is linear in $u,v$ due to the definition of secants.
\end{proof}

If $X_0$ is {\it one-nodal} ({\it i.e.}, has a node at $p\in X_0$ and has no other singular points),
then blowing up of $P^{n+1}$ at $p$ gives a resolution $\til X_0\to X_0$ with an exceptional divisor $\til Q\subset\til X_0$,
while the central projection from $p$
induces a regular map $\pi :\til X_0\to P^n$ which is
isomorphic to blowing up of $P^n$ along $Q\cap C$.

\begin{lemma}\label{lines-on-X0-generate}
If $X_0$ is a real one-nodal cubic hypersurface, then
$H_1(\widetilde X_0(\R))$
is generated
by the proper images of real lines $l(\R)\subset X_0(\R)$.

 More precisely, it is sufficient to pick $b_0(Q(\R)\cap C(\R))+1$ real lines on $X_0$, namely,
for each connected component of $Q(\R)\cap C(\R)$ we take one arbitrary line
passing through the node and
projected to this component
and in addition we take one arbitrary line not passing through the node.
\end{lemma}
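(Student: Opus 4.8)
The idea is to transfer everything to projective space via the morphism $\pi\colon\widetilde X_0\to P^n$. As recalled above, for one-nodal $X_0$ this $\pi$ identifies $\widetilde X_0$ with the blow-up of $P^n$ along $Z:=Q\cap C$, which is smooth of dimension $n-2$ by Lemma~\ref{transversal}. On real loci $\pi$ thus becomes the real blow-up $\rho\colon\widetilde X_0(\R)\to\Rp n$ of $\Rp n$ along $S:=Q(\R)\cap C(\R)$: a homeomorphism over $\Rp n\smin S$, with exceptional set $E:=\rho^{-1}(S)$ the total space of the $\Rp1$-bundle $\mathbb P_\R(N)\to S$, where $N$ is the real (rank~$2$) normal bundle of $S$ in $\Rp n$; near $E$ the space $\widetilde X_0(\R)$ is a bundle of M\"obius bands over $S$ whose subbundle of core circles is $E$. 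In particular $E$ is one-sided in $\widetilde X_0(\R)$ --- already the exceptional line of a blown-up real plane is one-sided. Let $S_1,\dots,S_m$ be the connected components of $S$ (so $m=b_0(Q(\R)\cap C(\R))$) and $[F_i]\in H_1(\widetilde X_0(\R))$ the class of a fibre of $E\to S$ over $S_i$. The claim reduces to: (a) $H_1(\widetilde X_0(\R))$ is generated by $[F_1],\dots,[F_m]$ together with any single class mapped by $\rho_*$ to the generator of $H_1(\Rp n)=\Z/2$; and (b) such a class is furnished by the proper transform of a real line on $X_0$ not through $p$, and each $[F_i]$ is the proper transform of a real line on $X_0$ through $p$.

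For (a) I would run Mayer--Vietoris for the cover of $\widetilde X_0(\R)$ by a closed tubular neighbourhood $A\simeq E$ of $E$ and the complement $B_0\simeq\Rp n\smin S$, whose intersection is the link $L$ of $E$. Since $E$ is one-sided, $L=\partial A$ is a connected double cover of $E$ which is an $S^1$-bundle over $S$; hence $L$, $E$ and $S$ have the same set of components, $\widetilde H_0(L)\to\widetilde H_0(E)$ is an isomorphism, the Mayer--Vietoris connecting map $H_1(\widetilde X_0(\R))\to\widetilde H_0(L)$ vanishes, and $H_1(\widetilde X_0(\R))$ is spanned by the images of $H_1(E)$ and $H_1(B_0)$. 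The key input is a lifting device: if $\Sigma\subset\Rp n$ is a compact surface transverse to $S$ (so $\Sigma\cap S$ consists of finitely many interior points), then $\rho^{-1}(\Sigma)$ is again a compact surface --- namely $\Sigma$ with a cross-cap attached at each point of $\Sigma\cap S$ --- with boundary the canonical lift of $\partial\Sigma$. Using surfaces whose boundary is a projective line disjoint from $S$ plus (when needed) a loop of $\Rp n\smin S$, the device shows that a loop of $\Rp n\smin S$ bounding in $\Rp n$ already bounds in $\widetilde X_0(\R)$; as a generic projective line is disjoint from $S$ and generates $H_1(\Rp n)$, the image of $H_1(B_0)$ in $H_1(\widetilde X_0(\R))$ is the $\Z/2$ spanned by the lift $[\ell]$ of a line. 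Using surfaces whose boundary runs inside $S$ (again together with a line if necessary), and recalling that $H_1(E)$ is generated by the classes $[F_i]$ and by lifts to $E$ of loops of $S$ --- each such lift differing by at most a fibre class from the section of $E$ it traces out on such a $\rho^{-1}(\Sigma)$ --- the device shows the image of $H_1(E)$ in $H_1(\widetilde X_0(\R))$ lies in $\langle[\ell],[F_1],\dots,[F_m]\rangle$. Hence $H_1(\widetilde X_0(\R))=\langle[\ell],[F_1],\dots,[F_m]\rangle$, and since $\rho_*$ separates $[\ell]$ (sent to the generator) from the $[F_i]$ (sent to $0$), its kernel is $\langle[F_1],\dots,[F_m]\rangle$, so $[\ell]$ may be replaced by any class mapping onto $H_1(\Rp n)$.

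For (b), a real line $l\subset X_0$ with $p\notin l$ has proper transform $\widetilde l$ in $\widetilde X_0\to X_0$ isomorphic to $l$, and $\pi$ carries $\widetilde l$ isomorphically onto the projective line of $P^n$ to which $l$ projects (Lemma~\ref{lines-on-X0}); so $\rho_*[\widetilde l(\R)]$ is the class of a projective line of $\Rp n$, i.e.\ the generator of $H_1(\Rp n)$. For each $i$, a real point $[v]\in S_i$ is the direction of a real line $l_v\ni p$ on $X_0$ (as $Q\cap C$ is the variety of lines on $X_0$ through $p$), and the proper transform of $l_v$ in $\widetilde X_0\to X_0$ is precisely $\pi^{-1}([v])$, the fibre of $E\to S$ over $[v]$; thus its class is $[F_i]$, independently of $[v]$ since $S_i$ is connected. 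Finally a real line on $X_0$ not through $p$ exists: by Lemma~\ref{lines-on-X0} it suffices to produce a real secant of $Z$ not contained in $Q$, which one obtains from two real points of $Z$ when $S\neq\emptyset$, and from the (real) line through a complex-conjugate pair of points of $Z$ otherwise --- such a secant being generically not contained in $Q$. Collecting the $m+1=b_0(Q(\R)\cap C(\R))+1$ real lines so obtained --- one off $p$ and one through $p$ over each component of $Q(\R)\cap C(\R)$ --- yields the asserted generating set of $H_1(\widetilde X_0(\R))$.

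The delicate point is step (a), and inside it the lifting device: one must verify that $\rho^{-1}$ of a surface transverse to $S$ is genuinely a topological surface with the stated boundary, and carefully track the fibre-class indeterminacy when the surface's boundary is allowed to lie in $S$. The vanishing of the Mayer--Vietoris connecting map (via the one-sidedness of $E$ and the matching of components of $L$, $E$ and $S$) and the existence of a suitable real secant of $Z$ off $Q$ are comparatively routine.
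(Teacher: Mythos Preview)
Your argument is correct and follows the same underlying strategy as the paper: use the identification of $\widetilde X_0$ with the blow-up of $P^n$ along $Q\cap C$ to compute $H_1(\widetilde X_0(\R))$ and then recognise the resulting generators as proper transforms of real lines on $X_0$. The paper, however, does not carry out this computation; it simply invokes the splitting
\[
H_1(\widetilde X_0(\R))=H_1(\Rp n)\oplus H_0(Q(\R)\cap C(\R))
\]
(referring to \cite[Prop.~2.5]{deformation} for details) and then appeals to Lemma~\ref{lines-on-X0}. Your Mayer--Vietoris argument and ``lifting device'' amount to an independent, self-contained derivation of this splitting, which is a genuine addition over the paper's two-line proof.

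Two small points deserve tightening. First, in the step where $\partial\Sigma=\gamma\subset S$, the object you want is the \emph{strict transform} of $\Sigma$, not the full preimage $\rho^{-1}(\Sigma)$: the latter contains all of $E|_\gamma$ and is not a surface with boundary, whereas the strict transform is a surface whose boundary is precisely the section of $E|_\gamma$ determined by the normal direction of $\Sigma$ along $\gamma$. With this correction your fibre-class bookkeeping goes through as stated. Second, your existence argument for a real line off $p$ (a real secant of $Z$ not lying in $Q$) relies on a genericity claim; for $n=2$ this is automatic since a smooth conic contains no lines, and for $n\ge 3$ a dimension count (secants of $Z$ have dimension $2n-4$, lines on $Q$ have dimension $2n-5$) shows the secants lying on $Q$ form a proper closed subset, so a suitable secant exists. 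Both issues are exactly the ``delicate points'' you flagged, and both are straightforward to close.
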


\begin{proof}
The blowup description of $\til X_0$ implies a splitting
\be
H_1(\widetilde X_0(\R))=H_1(P^2(\R))+H_0(Q(\R)\cap C(\R))
\label{decomposition}
\ee
and together with the description in Lemma \ref{lines-on-X0} it gives
 the required result
(for more details see Proposition 2.5 in \cite{deformation}).
\end{proof}



\begin{lemma}\label{cubic-surfaces}
For any real one-nodal cubic surface $X_0$ with the node $p\in X_0$ the group $H_1(X_0(\R)\sm p)$ is generated by
the real lines lying on $X_0$ and not passing through $p$.
\end{lemma}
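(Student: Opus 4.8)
The plan is to transfer everything to the resolution $\sigma\colon\til X_0\to X_0$ obtained by blowing up $P^3$ at $p$; its exceptional divisor $\til Q\subset\til X_0$ is identified with $Q$ and is precisely the fibre $\sigma^{-1}(p)$, so $\sigma$ restricts to a homeomorphism $\til X_0(\R)\sm\til Q(\R)\xrightarrow{\ \sim\ }X_0(\R)\sm p$. As a real line $\ell\subset X_0$ with $p\notin\ell$ has $\til\ell:=\sigma^{-1}(\ell)\cong\ell$ disjoint from $\til Q$, its real points lie in $A:=\til X_0(\R)\sm\til Q(\R)$ and $\sigma_*[\til\ell(\R)]=[\ell(\R)]$; so it suffices to show that $H_1(A)$ is generated by the classes $[\til\ell(\R)]$ of such lines. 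If $Q(\R)=\varnothing$, then $\til Q(\R)=\varnothing$, $A=\til X_0(\R)$, and $Q(\R)\cap C(\R)=\varnothing$, so Lemma~\ref{lines-on-X0-generate} already yields the claim (only one line off $p$ is needed). Hence assume $Q(\R)\cong S^1$.

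The structural input I would use is that $\pi\colon\til X_0\to P^2$ presents $\til X_0$ as the blowup of $P^2$ at the six points $Q\cap C$, which are distinct by Lemma~\ref{transversal}, with exceptional curves $E_1,\dots,E_6$, and that $\til Q$ is the strict transform of the conic $Q$, so its class is $2H-E_1-\dots-E_6$ with self-intersection $4-6=-2$. Therefore the normal bundle of $\til Q$ has even degree $-2$, hence orientable real part; thus $\til Q(\R)$ is a two-sided circle in the closed surface $\til X_0(\R)$, and $A$ deformation retracts onto the compact surface $W$ obtained by cutting $\til X_0(\R)$ along $\til Q(\R)$, with $\partial W$ the two parallel copies $c_-,c_+$ of $\til Q(\R)$. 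Let $q_1,\dots,q_k$ be the real points among $Q\cap C$ ($k$ is even). By \eqref{decomposition} the classes $h:=\pi^*[\,\text{real line}\,]$ and $e_i:=[E_i(\R)]$, $1\le i\le k$, form a basis of $H_1(\til X_0(\R))$, and $[\til Q(\R)]=e_1+\dots+e_k$.

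Next I would pin down the image of the inclusion-induced map $\iota\colon H_1(A)\to H_1(\til X_0(\R))$. Mayer--Vietoris for the covering of $\til X_0(\R)$ by $A$ and a tubular neighbourhood of $\til Q(\R)$ shows that $\ker\iota$ is generated by $[c_-]+[c_+]=[\partial W]$, which vanishes in $H_1(W)=H_1(A)$; so $\iota$ is injective. Any class in $\iota(H_1(A))$ is carried by a cycle disjoint from $\til Q(\R)$ and hence has zero mod-$2$ intersection with $[\til Q(\R)]=e_1+\dots+e_k$; since $E_i(\R)$ meets $\til Q(\R)$ transversally in a single point, while a general real line, missing the $q_i$, meets $Q(\R)\cong S^1$ in an even number of points, this intersection is the linear functional $\phi\bigl(\sum_i a_ie_i+bh\bigr)=\sum_i a_i$, so $\iota(H_1(A))\subseteq\ker\phi$. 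Finally $\chi(A)=\chi(\til X_0(\R))=1-k$, and $A$ is an open surface with $b_0(A)=1$ if $k\ge2$ (when $[\til Q(\R)]\ne0$ and $\til Q(\R)$ is non-separating) and $b_0(A)=2$ if $k=0$ (when $\til Q(\R)$ separates); in either case $\dim H_1(A)=b_0(A)-\chi(A)$ equals $\dim\ker\phi$, so $\iota(H_1(A))=\ker\phi$.

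It remains to produce enough real lines off $p$. By Lemma~\ref{lines-on-X0}, the line $\overline{q_iq_j}$ through two distinct real points of $Q\cap C$ --- which is not contained in the non-degenerate conic $Q$ --- is the projection of a real line $\ell_{ij}$ off $p$, and from $[\til\ell_{ij}]=H-E_i-E_j$ one gets $[\til\ell_{ij}(\R)]=h+e_i+e_j\in\ker\phi$; and when $k<6$, the line through a complex-conjugate pair among the remaining points projects a real line off $p$ whose class is $h-[E_q(\R)]-[E_{\bar q}(\R)]=h$. A short computation over $\Z/2$ shows these classes span $\ker\phi$: when $k<6$ one has $e_i+e_j=(h+e_i+e_j)+h$, and when $k=6$ one has $e_i+e_l=(h+e_i+e_j)+(h+e_j+e_l)$ and then $h=(h+e_i+e_j)+(e_i+e_j)$. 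Since $\iota$ is an isomorphism of $H_1(A)$ onto $\ker\phi$, the corresponding real lines generate $H_1(A)=H_1(X_0(\R)\sm p)$. The hard part is the identification $\iota(H_1(A))=\ker\phi$: it hinges on $\til Q(\R)$ being two-sided --- a property of the $(-2)$-curve over a node, in contrast to the one-sided curves $E_i$ --- and on treating the separating case $k=0$ separately, where the topology of $W$ and the rank of $H_1(A)$ differ from the generic case.
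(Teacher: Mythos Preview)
Your argument is correct and follows the same route as the paper: pass to $\til X_0(\R)\sm\til Q(\R)$, identify $H_1$ of this complement with the orthogonal complement of $[\til Q(\R)]=e_1+\dots+e_k$ in $H_1(\til X_0(\R))$, and then check that the classes $h+e_i+e_j$ (together with $h$ when $k<6$) span it. The paper simply asserts the orthogonal--complement identification and splits into cases by the number of imaginary pairs, whereas you supply a full justification (two--sidedness of $\til Q(\R)$, injectivity of $\iota$ via Mayer--Vietoris, and the Euler--characteristic dimension count) and organize the cases by the number $k$ of real blow--up points; the substance is the same.
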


\begin{proof}
The homeomorphism $X_0\sm p\cong \til X_0\sm\til Q$ and presentation of $\til X_0$ as $P^2$ blown up at six points of $Q\cap C$
gives 5 cases to be analyzed in the real setting. Namely, one case if
the number of imaginary pairs among these six points is $k=0,1,2$ and two cases for $k=3$ (one with $Q(\R)=\varnothing$ and another with  $Q(\R)\neq\varnothing$).

If $k=0$ or $1$,
splitting (\ref{decomposition}) implies that $H_1(\til X_0)$ is generated by
the class $l$ of  real lines $L$  in $P^2(\R)$ disjoint from
$Q(\R)\cap C(\R)$ and the classes  $e_i$
of the real exceptional curves $E_i$ ($i=1, \dots, 6-2k$) of the blowing up.
Then $\til Q(\R)$ represents the class $e_1+\dots+e_6$,
so that $H_1(\til X_0\sm\til Q)$, being its orthogonal complement,
is spanned by the classes $l+e_i+e_j$, $1\le i<j\le 6 -2k$
which are represented by
the proper images of real lines in $P^2(\R)$ connecting pairwise the real points of $Q\cap C$.

For $k=2$
and in the both cases of $k=3$, we use in addition the real line passing through a pair of imaginary points of $Q\cap C$.
\end{proof}

\begin{proposition}\label{nodal} For any
real one-nodal cubic hypersurface $X_0$ of dimension
$n \ge 2$
with the node $p\in X_0$, the group $H_1(X_0(\R)\sm p)$ is generated by
the real lines lying on $X_0$ and not passing through $p$.
\end{proposition}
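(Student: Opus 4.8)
The plan is to reduce the higher-dimensional statement to the surface case (Lemma \ref{cubic-surfaces}) by an induction on $n$ via generic hyperplane sections, combined with the description of $H_1(X_0(\R)\sm p)$ coming from the blowup/projection picture already set up in Lemmas \ref{lines-on-X0} and \ref{lines-on-X0-generate}. Throughout, I would work with the resolution $\widetilde X_0\to X_0$ and the identification $X_0(\R)\sm p\cong\widetilde X_0(\R)\sm\widetilde Q(\R)$, so that the projection $\pi\colon\widetilde X_0\to P^n$ realizes $\widetilde X_0$ as the blowup of $P^n$ along the smooth $(n-2)$-dimensional variety $Q\cap C$. Under this identification a real line on $X_0$ not through $p$ maps to a real secant of $Q(\R)\cap C(\R)$ not contained in $Q$, and conversely (Lemma \ref{lines-on-X0}); so the goal becomes purely a statement about $H_1$ of the complement of $\widetilde Q(\R)$ in this blowup and the classes carried there by such secants.

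First I would treat the base case $n=2$, which is exactly Lemma \ref{cubic-surfaces}. For the inductive step, assume $n\ge 3$ and choose a generic real hyperplane $P^{n}\subset P^{n+1}$ through the node $p$; then $X_0'=X_0\cap P^{n}$ is again a real one-nodal cubic hypersurface (of dimension $n-1$) with node $p$, its associated quadric and cubic are $Q'=Q\cap P^{n-1}$ and $C'=C\cap P^{n-1}$ for the corresponding hyperplane $P^{n-1}\subset P^n$ at infinity, and genericity keeps $Q'$ nonsingular and $Q'\cap C'$ transversal (Lemma \ref{transversal}), hence $X_0'$ one-nodal. Real lines on $X_0'$ not through $p$ are in particular real lines on $X_0$ not through $p$, so it suffices to show that the inclusion $X_0'(\R)\sm p\hookrightarrow X_0(\R)\sm p$ is surjective on $H_1$. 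Here I would invoke a Lefschetz-type argument: on the resolutions, $\widetilde X_0'\hookrightarrow\widetilde X_0$ is, away from the exceptional divisors, a hyperplane-type section, and by the real Lefschetz hyperplane theorem in the form suitable for $H_1$ with $\Z/2$-coefficients (or, concretely, a Morse-theory argument on the affine part $X_0(\R)\sm p\cong\widetilde X_0(\R)\sm\widetilde Q(\R)$, noting this is a smooth affine real variety of dimension $n-1\ge 2$ so its homotopy type is built from $X_0'(\R)\sm p$ by attaching cells of dimension $\ge n-1\ge 2$), the map on $H_1$ is onto.

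I would organize the Morse-theoretic step as follows. Realize $\widetilde X_0(\R)\sm\widetilde Q(\R)$ as the total space of a family over a pencil of hyperplanes through $p$ in $P^{n+1}$ whose generic member is $\widetilde X_0'(\R)\sm\widetilde Q'(\R)$; equivalently, use the blowup presentation and a generic real linear projection $P^n\dashrightarrow P^1$ to write the complement of $\widetilde Q(\R)$ as built from a generic fiber by attaching handles of index $\ge n-1$. Since $n-1\ge 2$, attaching such handles does not kill $H_1$, giving the desired surjectivity $H_1(X_0'(\R)\sm p)\twoheadrightarrow H_1(X_0(\R)\sm p)$; chasing the generators supplied by the inductive hypothesis on $X_0'$ then finishes the proof. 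The main obstacle I anticipate is making the Lefschetz/Morse step genuinely rigorous over $\R$: one must ensure the generic real hyperplane section is again one-nodal (this is where Lemma \ref{transversal} and a dimension count on the discriminant are used), handle the interaction with the exceptional divisor $\widetilde Q$ carefully (the complement, not the whole blowup, is what we need, so the relevant space is affine and the connectivity bound applies cleanly), and verify that the real critical points contributed by the projection have the expected Morse indices — all of which are standard but require care to state for $\Z/2$-homology rather than rational homology.
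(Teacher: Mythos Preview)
Your inductive strategy hinges on the claim that for a generic real hyperplane $H$ through $p$, the inclusion $X_0'(\R)\sm p\hookrightarrow X_0(\R)\sm p$ is surjective on $H_1$, and you justify this by a ``real Lefschetz hyperplane theorem'' or, equivalently, a Morse argument asserting that the affine part is built from the hyperplane slice by attaching cells of dimension $\ge n-1$. This is the step that fails. Over $\C$ the Andreotti--Frankel bound forces Morse indices of a generic linear function on a smooth affine variety of complex dimension $n$ to be $\le n$, which is what gives Lefschetz; over $\R$ there is no such constraint, and a generic real linear function on a smooth real affine $n$-fold can have critical points of any index from $0$ to $n$. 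So nothing prevents index-$1$ handles from appearing, and these can create new $H_1$. The difficulty is not a matter of $\Z/2$ versus rational coefficients, as you suggest, but is intrinsic to the real setting. Concretely, already for $n=3$ with $Q$ of signature $(2,2)$ one has $H_1(Q(\R);\Z/2)=(\Z/2)^2$ generated by the two rulings, while a generic hyperplane section $Q'=Q\cap P^2$ has $Q'(\R)\cong S^1$ mapping to the diagonal class $(1,1)$; thus $H_1(Q'(\R))\to H_1(Q(\R))$ is not onto, and tracking this through the tube map shows there is no reason for your surjectivity claim to hold without a further, case-specific argument that you do not supply.

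The paper's proof sidesteps this entirely. Rather than inducting down one dimension at a time, it uses the long exact sequence of the pair $(\widetilde X_0(\R),\widetilde X_0(\R)\sm\widetilde Q(\R))$ together with Poincar\'e--Lefschetz duality to describe $H_1(\widetilde X_0(\R)\sm\widetilde Q(\R))$ explicitly: one part comes from lifts of sums $[L_1^*]+[L_2^*]$ of lines through the node (via Lemma~\ref{lines-on-X0-generate}), and the other from tubes over $1$-cycles in $\widetilde Q(\R)$, which in turn are generated by real lines on the quadric. Each such generator is then realized by lines not through $p$ by cutting with a suitably chosen real \emph{$3$-plane} (not a hyperplane) through the node and applying the surface case (Lemma~\ref{cubic-surfaces}) directly. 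This jump straight to dimension $2$ avoids any appeal to a Lefschetz-type connectivity statement over $\R$.
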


\begin{proof}
The case $n=2$ was just analyzed, so, we assume that $n=\dim X_0\ge 3$
 and identify $H_1(X_0(\R)\sm p)$ with $H_1(\widetilde X_0(\R) \sm \widetilde Q(\R))$.

 If $\widetilde Q(\R)=\varnothing$, then $X_0(\R)\sm p$ is homeomorphic to $P^n(\R)$ and contains plenty of real lines (one per each of real planes through the node), so in this case the statement is trivial.

 Assume that $\widetilde Q(\R)\ne \varnothing$, and
consider the following segment of a long exact sequence for the pair $(\widetilde X_0(\R), \widetilde X_0(\R) \sm \widetilde Q(\R))$
\begin{eqnarray}\begin{split}
\ \ H_2(\widetilde X_0(\R), \widetilde X_0(\R) \sm &\widetilde Q(\R))\to H_1(\widetilde X_0(\R) \sm \widetilde Q(\R))\to H_1(\widetilde X_0(\R))
\\
&\to H_1(\widetilde X_0(\R), \widetilde X_0(\R)\sm \widetilde Q(\R)).
\end{split}\label{long}\end{eqnarray}
By Poincar\'e-Lefschetz duality, $H_1(\widetilde X_0(\R), \widetilde X_0(\R)\sm \widetilde Q(\R))=H^{n-1}(\widetilde Q(\R))=\Z/2$
which identifies the rightmost homomorphism with the intersection index homomorphism
$x\mapsto x\cap [\widetilde Q(\R)]$, while $H_2(\widetilde X_0(\R), \widetilde X_0(\R)\sm \widetilde Q(\R))=H^{n-2}(\widetilde Q(\R))=
H_1(\widetilde Q(\R))$
which identifies the leftmost homomorphism with taking the boundary of a small
band in $\widetilde X_0(\R)$
transversely intersecting   $\widetilde Q(\R)$ along its core circle.

This implies that $H_1(\widetilde X_0(\R) \sm \widetilde Q(\R))$ is generated by the image of
the leftmost homomorphism and any choice for a lift
to $H_1(\widetilde X_0(\R) \sm \widetilde Q(\R))$ of the sums
$[L^*_1(\R)]+[L^*_2(\R)]$ taken, in accord with Lemma \ref{lines-on-X0-generate},
over all pairs of real lines picked up
in different connected components
of the space of real lines passing through the node and contained in $X_0(\R)$
(symbol $*$ states for taking the proper image in $\widetilde X_0(\R)$).

As for
$[L^*_1(\R)]+[L^*_2(\R)]$ taken as above, the 2-plane generated by the two lines
 is not contained in $Q(\R)$.
Tracing a generic real 3-plane through $L_1$ and $L_2$, we get a real nodal cubic surface
$Y_0$.
Lemma \ref{cubic-surfaces} applied to
$Y_0$ shows that
the homology class $[L^*_1(\R)]+[L^*_2(\R)]$ can be lifted
to a class in $H_1(\widetilde Y_0(\R) \sm \widetilde Q(\R))$
represented by real lines contained in $\widetilde Y_0(\R) \sm \widetilde Q(\R)$, hence in
$\widetilde X_0(\R) \sm \widetilde Q(\R)$.

As for the image of the leftmost homomorphism of (\ref{long}),
it is generated by the boundaries of
thin bands in $\til X_0(\R)$ transversally crossing $\til Q(\R)$ along real lines
$l^*(\R)$,
because the first homology group
of $\widetilde Q(\R)= Q(\R)$, as for that of any real quadric,
is known to be generated by real lines.
The boundary 1-cycle of such a band
is clearly bordant to the union of two real lines in $P^n(\R)$: one inside $Q(\R)$ and another outside.
Let us pick one of these two lines, $h(\R)$,
and consider a generic real 3-plane $H\subset P^{n+1}$ passing through $h$ and the node.
Then, $H(\R)\cap X_0(\R)$
is a real nodal cubic surface, where
the proper image $h^*(\R)$ of $h(\R)$ does not pass through the node and, hence, by Lemma \ref{cubic-surfaces}
its homology class is spanned by the classes of real lines
contained in the surface $H(\R)\cap X_0(\R)$
and not passing through the node.
\end{proof}

\section{Passing to non-singular cubics}
Any non-singular real cubic $n$-dimensional hypersurface can be included into a smooth family $\{X_t\}$, $t\in\R$, of real cubics intersecting
transversely the discriminant hypersurface
in the projective space formed by all cubics  $X\subset P^{n+1}$. We call such $\{X_t\}$ a {\it Morse family}, since variation
of $t$ gives Morse modifications as $t$ passes values $t_0$ at which $X_{t_0}$ is singular (one-nodal, because of our transversality condition).

Recall that the {\it Morse index} of such a modification
is equal to $i$ if for $t<t_0$
close to $t_0$
the vanishing real sphere $S_t\subset X_t$ is of dimension $i-1$, $0\le i\le n+1$,
(then for $t>t_0$ close to $t_0$ it has dimension $n-i$).
For instance, in the case $i=0$, for $t<t_0$ the vanishing real sphere is empty,
and so, the Morse modification
 is  a ``birth'' of spherical component $S^n$ in $X_t$, $t>t_0$, while for index $i=n+1$
Morse modification is a ``death'' of $S^n$ in $X_t$, $t<t_0$.

As is well known, the family $\{X_t\sm S_t\}_{\vert t-t_0\vert <\epsilon}$ (where $S_0$ stands for $p$) forms a trivial fibration
over $\vert t-t_0\vert <\epsilon $, and therefore the exists a well defined natural {\it parallel transport} isomorphism $H_1(X_0(\R)\sm p)\to H_1(X_t(\R)\sm S_t)$
for each $t$ in $\vert t-t_0\vert <\epsilon$.

\begin{lemma}\label{for2andhigher} If $X_t$ is
an index $i\ne 1$ Morse family of dimension $n\ge 2$
real cubic hypersurfaces
and $S_t$, $t>0$, a continuous family of vanishing
spheres collapsing to the node $S_0=p$ of $X_0$, then
the composition of the parallel transport
isomorphism $H_1(X_0(\R)\sm p)\to H_1(X_t(\R)\sm S_t)$, $t>0$,
 with the inclusion homomorphism $H_1(X_t(\R)\sm S_t)\to H_1(X_t(\R))$  is an epimorphism.
\end{lemma}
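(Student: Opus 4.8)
The plan is to distinguish the two possible effects of a Morse modification of index $i\ne 1$ on the homology of $X_t(\R)$, and to show that in either case the part of $H_1(X_t(\R))$ not already hit by $H_1(X_t(\R)\sm S_t)$ is trivial. First I would record what the vanishing sphere $S_t$ looks like: for $t>0$ it is a sphere of dimension $n-i$, and $X_t(\R)\sm S_t$ is obtained from $X_0(\R)\sm p$ by the parallel transport, which is already known to be an isomorphism, so it suffices to analyze $H_1(X_t(\R)\sm S_t)\to H_1(X_t(\R))$ for $t>0$ fixed and small. The cokernel of this inclusion map is controlled, via excision and a Mayer--Vietoris or long-exact-sequence argument, by the homology of a tubular neighborhood pair $(\nu S_t,\partial\nu S_t)$, i.e.\ by $H_*(S_t)$ shifted by the codimension $n-(n-i)=i$.

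Next I would split into cases according to $\dim S_t = n-i$. If $n-i=0$, then $S_t$ is a finite set of points; removing points from an $n$-manifold with $n\ge 2$ does not change $H_1$, so the inclusion is already an isomorphism and we are done. If $n-i\ge 2$, then $S_t$ is a sphere of dimension $\ge 2$, a neighborhood of it is a disk bundle over $S^{n-i}$, and the relevant relative group $H_2(X_t(\R),X_t(\R)\sm S_t)$ and $H_1(X_t(\R),X_t(\R)\sm S_t)$ are computed by Thom isomorphism / Poincar\'e--Lefschetz duality to be $H^{n-2}(S_t)$ and $H^{n-1}(S_t)$ respectively; since $2\le n-i$, one checks these contribute nothing that obstructs surjectivity onto $H_1$ — concretely, in the long exact sequence of the pair, $H_1(X_t(\R)\sm S_t)\to H_1(X_t(\R))$ is surjective because $H_1(X_t(\R),X_t(\R)\sm S_t)$ injects into $H_0$ (it is zero unless $n-1=0$ or $n-1=n-i$, and $n\ge 2$, $i\ge 2$ handle these). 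The only delicate subcase is $n-i=1$, i.e.\ $i=n-1$: here $S_t$ is a circle, and removing it can kill a class in $H_1(X_t(\R))$ — but a neighborhood of $S_t$ contains $S_t$ itself as a $1$-cycle, and the boundary map $H_2(X_t(\R),X_t(\R)\sm S_t)\to H_1(X_t(\R)\sm S_t)$ has image in the kernel of the inclusion; so the cokernel of $H_1(X_t(\R)\sm S_t)\to H_1(X_t(\R))$ is spanned by $[S_t]$, and one must show $[S_t]$ already lies in the image, equivalently that $[S_t]=0$ in $H_1(X_t(\R))$ or is represented by a cycle avoiding $S_t$. For a vanishing cycle of an index $n-1$ modification this follows from the local normal form $x_1^2+\dots+x_{i-1}^2 - x_i^2-\dots=\pm t$: the sphere $S_t$ bounds in the ambient real locus on the side where the quadratic form becomes a genuine cone, so $[S_t]$ is null-homologous.

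The main obstacle I expect is precisely the subcase $i=n-1$ (vanishing circle): there the inclusion $H_1(X_t(\R)\sm S_t)\to H_1(X_t(\R))$ genuinely has a potential one-dimensional cokernel generated by $[S_t]$, and one has to argue — from the Morse-theoretic local model, together with the exclusion of $i=1$ which rules out the symmetric ``birth'' situation — that $[S_t]$ is in fact already realized by a loop disjoint from $S_t$. Once that vanishing is established, the surjectivity follows formally from the long exact sequence of $(X_t(\R),X_t(\R)\sm S_t)$ together with the duality computation of the relative groups, and the composition with the parallel transport isomorphism gives the claimed epimorphism.
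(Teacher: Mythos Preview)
Your approach via the long exact sequence of the pair $(X_t(\R),X_t(\R)\sm S_t)$ together with Poincar\'e--Lefschetz duality is exactly the paper's approach, but the paper does it in one uniform line while you split into cases and manufacture a difficulty that is not there.

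The paper simply observes that
\[
H_1(X_t(\R),X_t(\R)\sm S_t)\cong H^{n-1}(S_t)=H^{n-1}(S^{\,n-i})=0,
\]
because $n-1\ne 0$ (from $n\ge2$) and $n-1\ne n-i$ (from $i\ne1$). That already forces the inclusion map on $H_1$ to be surjective, with no case analysis.

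Your ``delicate subcase'' $n-i=1$ is in fact not delicate: $i=n-1$ together with $i\ne1$ forces $n\ge3$, so $H^{n-1}(S^1)=0$ and the same duality computation finishes the job. You did not notice this, and the argument you supply in its place is flawed on two counts. First, the potential cokernel is not ``spanned by $[S_t]$'': under the duality identification, the map $H_1(X_t(\R))\to H^{n-1}(S_t)$ is intersection with $[S_t]$, so what might fail to lie in the image are classes that meet $S_t$ nontrivially, whereas $[S_t]$ itself can always be pushed off $S_t$ inside a tubular neighbourhood. Second, your claim that $S_t$ bounds in the local Morse model is false: for $i=n-1$ the local level set $\{x_1^2+x_2^2-x_3^2-\dots-x_{n+1}^2=t\}$, $t>0$, deformation retracts onto $S_t\cong S^1$, which generates its $H_1$ and does not bound there. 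So the extra argument you propose is both aimed at the wrong target and incorrect; fortunately it is also unnecessary.
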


\begin{proof} It is a straightforward consequence of the long exact sequence
$$
H_1(X_t(\R)\sm S_t)\to H_1(X_t(\R))\to H_1(X_t(\R),X_t(\R)\sm S_t)
 $$
and the Poincar\`e-Lefschetz duality
$$H_1(X_t(\R);X_t(\R)\sm S_t) \cong H^{n-1}(S_t)=0,
$$
since $\dim S_t=n-i$ with $i\ne 1$ and $n\ge 2$.
\end{proof}

\begin{lemma}\label{stability}
For any Morse family $X_t$ of real cubics
with $-\e<t<\e$
the parallel transport homomorphism $H_1(X_0(\R)\sm p)\to H_1(X_t(\R)\sm S_t)$ maps classes of real lines
not passing through $p$ to classes of real lines not intersecting $S_t$.
\end{lemma}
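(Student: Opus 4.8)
The statement is essentially a continuity/compactness assertion: a real line $l\subset X_0$ with $l(\R)\cap\{p\}=\varnothing$ survives, together with its disjointness from the vanishing cycle, under a small perturbation of the cubic in a Morse family. The plan is to work on the incidence variety of lines on the family. Let $\Cal F\to(-\e,\e)$ be the relative Fano scheme of lines, $\Cal F_t=\{l\subset X_t\}$; over the locus of nonsingular $X_t$ this is a smooth projective variety, and over $t_0$ (here normalized to $0$) it has the node of $X_0$ reflected only in the sublocus of lines through $p$. First I would observe that for $t$ close to $0$ any real line $l_0\subset X_0$ with $l_0(\R)\not\ni p$ deforms to a real line $l_t\subset X_t$: indeed $l_0$ is an unobstructed point of $\Cal F_0$ — the normal bundle $N_{l_0/X_0}$ has no $H^1$ because $l_0$ avoids the singular point, so the relative Fano scheme is smooth at $[l_0]$ over the base and the real point deforms along $\R$ by the real implicit function theorem.

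Next I would control the disjointness from $S_t$. Since $l_0(\R)$ is a compact set disjoint from $p=S_0$, and the family $X_t(\R)\sm S_t$ is (as recalled just before the lemma) a trivial fibration over $|t|<\e$ with fiber direction transverse to the collapsing spheres, for $|t|$ small enough the deformed line $l_t(\R)$ stays in the fibrewise image of $X_0(\R)\sm p$, hence stays in $X_t(\R)\sm S_t$. Concretely: choose a tubular neighborhood $U$ of the vanishing cycle inside the total space $\bigcup_t X_t(\R)$ such that $U\cap X_0(\R)$ is a small ball around $p$; since $l_0(\R)\cap U=\varnothing$ and $[l_t]\to[l_0]$ in $\Cal F(\R)$, continuity of the evaluation map $\Cal F(\R)\times\Rp1\to\bigcup_t X_t(\R)$ forces $l_t(\R)\cap U=\varnothing$ for $t$ small, so in particular $l_t(\R)\cap S_t=\varnothing$. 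Finally I would check that this $l_t$ is exactly the image of $l_0$ under the parallel transport isomorphism: the parallel transport is induced by the trivialization of $\{X_t(\R)\sm S_t\}$, and $\bigcup_{|t|<\e'} l_t(\R)$ is a section of that trivial fibration sitting over the arc in $\Cal F(\R)$ from $[l_0]$ to $[l_t]$, so under the trivialization it is carried to (a cycle homologous to) $l_0(\R)$; thus the parallel transport sends $[l_0(\R)]$ to $[l_t(\R)]$, which is the class of a real line avoiding $S_t$.

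The main obstacle is the bookkeeping that identifies the algebraically-deformed line $l_t$ with the topologically-transported cycle — i.e. that the two notions of ``the same line for nearby $t$'' coincide. This is where one must be a little careful: the parallel transport isomorphism is only defined up to the trivialization chosen, and one needs that the arc $t\mapsto l_t(\R)$ in the total space projects to a genuine path of lines (no jump to a different component of $\Cal F(\R)$, no collision with $S_t$ along the way). Both are guaranteed by shrinking $\e$ using compactness of $l_0(\R)$ and smoothness of $\Cal F$ at $[l_0]$, but it is worth stating explicitly. Everything else — unobstructedness, the real implicit function theorem, continuity of evaluation — is routine.
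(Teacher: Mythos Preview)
Your plan is correct and in fact follows essentially the same strategy as the paper: show that a real line $l_0\subset X_0$ avoiding the node deforms to a nearby real line $l_t\subset X_t$ (hence automatically avoiding $S_t$ for small $t$), and observe that this deformation realizes the parallel transport at the level of $H_1$.

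The difference lies in how the deformability is justified. The paper does not argue unobstructedness directly; instead it quotes Clemens--Griffiths (Lemma~7.7 and Corollary~7.6 of \cite{CC}) to say that the variety of lines missing $p$ is smooth and that \emph{balanced} lines (type~I) are open and dense there, and then notes that balanced lines are transversal zeros of the tautological section of $\operatorname{Sym}^3 S^\vee$ over the Grassmannian, hence persist under perturbation of the cubic. Your route is to assert $H^1(N_{l_0/X_0})=0$ outright, which makes the relative Fano scheme smooth over the base at $[l_0]$ and lets the real implicit function theorem do the rest. This is more self-contained and actually shows the detour through balanced lines is unnecessary: for \emph{any} line on a cubic $n$-fold lying in the smooth locus, the normal bundle sits in $0\to N_{l/X}\to\mathcal O(1)^{\oplus n}\to\mathcal O(3)\to 0$, so its splitting type has every summand $\ge -1$ and $H^1$ vanishes. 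You should state this computation rather than writing ``has no $H^1$ because $l_0$ avoids the singular point''; avoiding the node only guarantees that $N_{l_0/X_0}$ is a well-defined subbundle of $\mathcal O(1)^{\oplus n}$, and it is the degree bound specific to cubics that kills $H^1$. With that one line added, your argument is complete, and your explicit treatment of the identification between the algebraic deformation $l_t$ and the topological parallel transport is a point the paper leaves implicit.
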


\begin{proof} According to Lemma 7.7 in \cite{CC} the variety of real lines not passing through $p$ is smooth and of pure dimension $2(n-2)$. Furthermore, by Corollary 7.6 in \cite{CC} the balanced lines (lines of type I in terminology of \cite{CC}) form there an open dense subset. Thus, there remains to notice that the balanced lines are stable under deformation, since they can be seen as transversal zeros of the section defined by the equations of the cubics in the family in the corresponding vector bundle over the Grassmannian of lines.
\end{proof}

\begin{proof}[Proof of Theorem \ref{main}]
In dimension $2$ the claim of the theorem is well known. So, we assume that $\dim X\ge 3$ and
include $X$ as $X_t$ with $t>0$ in a Morse family of real cubics perturbing
a nodal one, $X_0$.
If its Morse index $\ne 1$ then the claim
follows from Lemmas \ref{for2andhigher} and \ref{nodal}.

In the case of Morse index $1$ we apply Lemmas \ref{nodal} and
\ref{stability},
and notice in addition that each of real lines in $X_0(\R)$ passing through the node
can be varied continuously as a line in $X_t$ for $t>0$ (see lemma \ref{index1-perturbation} below).
\end{proof}

\begin{lemma}\label{index1-perturbation}
Consider a Morse family $X_t$, $-\e<t<\e$, of real cubic hypersurfaces in $P^{n+1}$, $n\ge2$, perturbing
a one-nodal cubic $X_0$ with index $1$. Then any real line $l_0\subset X_0$, $p\in l_0$ can be extended to
a continuous family of real lines $l_t\subset X_t$, for $0\le t\le\e$.
\end{lemma}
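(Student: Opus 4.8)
The plan is to work locally near the node $p$ and produce the family of lines through an explicit deformation argument, combined with an implicit function theorem applied to the incidence variety of lines through a varying point on the varying cubic. First I would set up affine coordinates centered at $p$ so that $X_0$ is given by $f_2+f_3=0$ with $f_2$ non-degenerate (since $p$ is a node), and realize $l_0$ as the line through $p$ whose direction $[v]\in P^n$ lies on $Q\cap C$, i.e. $f_2(v)=f_3(v)=0$. The Morse family $X_t$ has, near $p$ and for small $t$, a unique singular-or-critical behaviour controlled by a parameter; since the index is $1$, for $t>0$ the vanishing sphere $S_t$ is $(n-1)$-dimensional, and more importantly the local equation of $X_t$ can be written (after a change of coordinates depending smoothly on $t$) as $f_2^{(t)}+tq+\text{(cubic)}$ where $f_2^{(t)}$ stays non-degenerate. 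The key point is that the node does not disappear to ``nothing'': there is a smooth surface $T_t\subset X_t(\R)$ (the quadric-like locus replacing the exceptional behaviour) and, crucially, each line through $p$ on $X_0$ deforms because the condition ``a line lies on a cubic'' is a system of equations whose linearization at $l_0$ we must show is surjective.

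The main work is therefore the following transversality/implicit-function step. Consider the incidence variety $\mathcal{I}\subset \mathbb{R}\times F(P^{n+1})$ of pairs $(t,l)$ with $l\subset X_t$, where $F$ denotes the Grassmannian of lines, and the sub-locus $\mathcal{I}_p\subset\mathcal{I}$ of those $l$ passing through $p$. A line on a cubic imposes $4$ conditions (the restriction of the cubic form to $l\cong P^1$ must vanish, a degree-$3$ binary form, $4$ coefficients), so $\dim F(P^{n+1})=2n$ and the expected dimension of the variety of lines on a fixed cubic is $2n-4$; restricting to lines through $p$ cuts this by $n-1$ (choice of direction in $P^n$, minus the cubic condition on that direction which is $2$ conditions $f_2=f_3=0$, giving expected dimension $n-2$). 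I would show that at $(0,l_0)$ the projection $\mathcal{I}_p\to\mathbb{R}$, $(t,l)\mapsto t$, is a submersion onto a neighbourhood of $0$, equivalently that the differential of the defining equations in the line-direction is surjective onto the complement of the $\partial_t$-direction. Concretely: parametrize nearby lines through $p$ by their direction $v(s)$, $v(0)=v_0$; the equation to solve is $G(s,t):=\big(f_2^{(t)}(v(s)),\,f_3^{(t)}(v(s))\big)=(0,0)\in\mathbb{R}^2$ together with the full cubic equation of $X_t$ restricted to the line, but for a line through the node the top and constant coefficients of that restricted binary cubic vanish automatically, leaving exactly the two conditions $f_2=f_3=0$ on the direction. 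So I must check that $\partial_v\big(f_2^{(0)},f_3^{(0)}\big)$ at $v_0$ is surjective onto $\mathbb{R}^2$ — but this is precisely the transversality of $Q\cap C$ guaranteed by Lemma \ref{transversal} (since $X_0$ is one-nodal): the tangent spaces $T_{v_0}Q$ and $T_{v_0}C$ meet transversally in $P^n$, which says the pair of differentials $(df_2,df_3)$ has rank $2$ at $v_0$. Hence the implicit function theorem over $\mathbb{R}$ gives a smooth curve $s=s(t)$, $t\in[0,\e)$, and $l_t$ is the line through $p$ in direction $v(s(t))$; it lies on $X_t$ because, as noted, the remaining two coefficients of the restricted cubic form vanish identically for lines through $p$, so $f_2=f_3=0$ on the direction forces the whole binary cubic to vanish.

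The hard part is bookkeeping the deformation of the node itself: as $t$ varies the ``centre'' $p$ in the appropriate local model may drift, and one must argue that there is a well-defined smooth family of points $p_t\in X_t(\R)$ (for index $1$, $p_t$ is the centre of the shrinking region bounded by $S_t$, persisting as a real point for $t>0$) with $p_0=p$, so that the statement is really about lines through $p_t$ in $X_t$ rather than through the fixed $p$; alternatively, since the hypersurface through which we project is determined only up to the coordinate choices discussed before Lemma \ref{transversal}, I would phrase everything in a chart where the node has been tracked and then observe that the conclusion ``$l_t\subset X_t$'' is coordinate-free. I also need to note that $l_t(\R)$ stays real: this is immediate because the IFT is applied over $\mathbb{R}$ and the initial data $(0,l_0)$ is real. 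Finally I would remark that continuity of $t\mapsto l_t$ at $t=0$ and on the closed interval $[0,\e)$ follows from smoothness of $s(t)$ together with the (semialgebraic, hence continuous) dependence of the local normal form on $t$; no compactness issue arises since we only need a one-sided germ. In sum: set up the local model of the index-$1$ modification, identify lines through the node with directions on $Q\cap C$, and apply the real implicit function theorem using the transversality of $Q\cap C$ from Lemma \ref{transversal} as the non-degeneracy hypothesis.
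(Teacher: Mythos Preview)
Your implicit-function-theorem argument has a genuine gap at its core. The claim that ``for a line through the node the top and constant coefficients of that restricted binary cubic vanish automatically, leaving exactly the two conditions $f_2=f_3=0$'' is correct only at $t=0$. For $t\ne 0$ the hypersurface $X_t$ is smooth, so there is no point of $X_t$ at which the affine equation begins in degree~$2$; at any smooth point the linear part of the equation is nonzero. Hence a line through your putative $p_t$ still has to satisfy \emph{four} conditions, not two, and solving $(f_2^{(t)},f_3^{(t)})=(0,0)$ for the direction does not produce a line on $X_t$. The ``hard bookkeeping'' you postpone---tracking a moving centre $p_t$---cannot rescue this, because no such point exists on a smooth cubic.

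Nor can the argument be repaired by applying the IFT to the full $4$-equation system on the Grassmannian. A direct linearisation shows that at a line $l_0$ through the node the Jacobian of the four coefficients of $F_0|_l$ with respect to the $2n$ Grassmannian coordinates has rank only~$3$: the coefficient of $s^3$ vanishes to second order. Equivalently, the Fano scheme $F(X_0)$ is \emph{not} smooth at $l_0$ (its Zariski tangent space has dimension $2n-3$ instead of $2n-4$), so the projection $\mathcal I\to\R$ need not be a submersion there. This is why the index-$1$ hypothesis, which your argument never actually uses, is essential: the double point of $F(X_0)$ at $l_0$ splits into two real lines or a complex-conjugate pair depending on the sign of the perturbation, and it is precisely the index that controls this sign. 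The paper sidesteps the whole issue by slicing with a generic real $3$-plane $H\supset l_0$ to reduce to a Morse family of cubic \emph{surfaces} of the same index~$1$, and then invoking the classical (and genuinely index-dependent) fact for $n=2$.
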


\begin{proof} For $n=2$ (case of cubic surfaces), this fact is well known and simple.
In dimensions $n\ge3$, we take a generic real 3-plane $H\subset P^{n+1}$ containing line $l_0$
and note that $X_t\cap H$ is a Morse family of the same index 1. Then we obtain a required family $l_t\subset X_t\cap H$.
\end{proof}

\section{Concluding Remarks}

\subsection{Real rational surface whose group $H_1(X(\R))$ is not generated by embedded rational curves}
The desired
examples are provided
by maximal real del Pezzo surfaces $X$ of degree $K_X^2=1$.
Recall that such a surface
has the real locus $X(\R)=\Rp2\+4S^2$
and
the complex conjugation
involution in
$H_2(X; \Z)$
acts as the reflection in the hyperplane orthogonal to $K_X$. By this reason, the only classes of real curves are $-mK_X$ with $m\ge 1$.
By the adjunction formula, the arithmetic genus of these curves is
$g=1+\frac12(m^2-m)\ge1$ hence $X$ contains no embedded real curves of genus $0$.

\subsection{Real quartics without line generation of $H_1$}
The locus of complex points of a non-singular projective quartic hypersurface
of dimension $4+m\ge 4$ is pointwise covered by complex lines.
However, it happens to be insufficient
for generating the first homology of the real locus by real lines.
Here is an example which
can be obtained by a small variation of a nodal quartic. Namely, it is sufficient to pick as a starting point
a nodal hypersurface $X_0$ given by affine equation $f(x_0,\dots, x_{4+m})=0$
%
$$
f(x)=f_2(x)+\e f_4(x),\ f_2(x)=\sum_{i=1}^{4+m}x_i^2-x_0^2,\
 f_4(x)=(\sum_{i=1}^{4+m}x_i^2-2x_0^2) (\sum_{i=1}^{4+m}x_i^2+x_0^2).
$$
Such a hypersurface contains no real lines passing through the node (since $f_2=f_4=0$ has no real roots except $0$).
But, if $\e$ is sufficiently small, it contains loops passing through the node and homologous to real lines
in the ambient projective space
(since then $X_0(\R)$ is topologically equivalent to the real cone defined by $f_2$).
Moreover, for small $\e >0$ the Morse modification $X_t$ of $X_0$ given by equation $f(x)=t$ with $t>0$ close to $0$
replaces the nodal point by $S^{3+m}$ not homologous to $0$,
and in $X_t(\R)$, due to absence of real lines through the node in $X_0$, the homology class Poincar\'e dual to $[S^{3+m}]$ can not be realized by a combination of real lines.

As it was communicated to us by O.~Wittenberg, there exist other examples, suggested by J.~Koll\'ar and F.~Mangolte \cite{KM}  and based on a different idea
that provides examples with even stronger properties.
Namely, it is sufficient to pick $m+4\ge 3$ quadratic forms
$q_1, ..., q_{m+4}$ in $m+6$ variables such that the system $q_1=...=q_{m+4}=0$ defines a
smooth curve
in $P^{m+5}$ whose real locus has at least two connected
components.  Then, one can prove by an appropriate bounded degrees limit argument ({\it cf.} \cite[Example 26.2]{KM} and \cite[Example 9.10]{hodge}) that, for any $d\ge 1$,
the first homology group $H_1(X_t(\R),\Z/2\Z)$
of the quartic hypersurfaces $X_t$ defined
in $P^{m+5}$ by
equation
$$
q_1^2 + ... + q_{m+4}^2 = t(x_0^4 + ... + x_{m+5}^4)
$$
is not spanned by the fundamental classes of algebraic curves of degree
$\le d$,  as soon as $t>0$ is small enough.

As we show in the final remark, our examples open a way to generalizations in another direction,
to higher dimensional homology groups.

\subsection{Real cubics without 2-plane generation of $H_2$}
Note first, that each real line contained in the projective quadric $\sum^{3+m}_{i=0} y_i^2 - u^2- v^2= 0$ can be generated by 2 real points
$(A,1,0)$ and $(B,0,1)$ with $\sum_{i=0}^{3+m}A_i^2=1=\sum_{i=0}^{3+m} B_i^2, \sum_{i=0}^{3+m} A_iB_i=0$.
This implies that, for any $\e_1\ne 0$
the quadro-cubic given by equations
$$
\sum^{3+m}_{i=0} y_i^2 - u^2- v^2= 0,\
y_0^3+\e_1(u^3+v^3)=0
$$
contains no real lines at all, but for $\e_1$ close to 0 it contains real pseudo-lines (since the real locus of such a quadro-cubic is isotopic to
that of the hyperplane section $\sum^{3+m}_0 y_i^2 - u^2- v^2= 0,
y_0=0 $).
 Hence, for any $\e_2\ne 0$ sufficiently small,
the nodal real cubic of dimension $5+m\ge 5$ given by affine equation
$$
\sum^{3+m}_{i=0} y_i^2 - u^2- v^2= \epsilon_2
(y_0^3+\epsilon_1(u^3+v^3))
$$
contains no real 2-plane passing through the node,
but
contains real 2-pseudo-planes passing through the node. The latter implies
that the Morse modification replacing the node by $S^{3+m}$ leads to $S^{3+m}$ not homologous to zero (to check it, note that the link of the node is naturally identified, on one hand, with the boundary of tubular neighborhoods of $S^{3+m}$ and, on the other hand, with the lift of the real locus of the quadric $\sum^{3+m}_{i=0} y_i^2 - u^2- v^2=0$
(in $\R P^{5+m}$)
defining the node to $S^{5+m}$ with respect to the standard covering $S^{5+m}\to \R P^{5+m}$), and by this reason
the $H_2$ of the non-singular real cubic obtained is not generated by real 2-planes.

\end{document}